\newcounter{thmcounter}
\newtheorem{lemma}{Lemma}
\newtheorem*{corollary*}{Corollary}
\newtheorem{theorem}[thmcounter]{Theorem}
\newtheorem*{theorem*}{Theorem}
\newcommand{\IP}{{\bf P}}
\newcommand{\IG}{{\bf G}}
\newcommand{\IC}{{\bf C}}
\newcommand{\IH}{{\bf H}}
\newcommand{\IR}{{\bf R}}
\newcommand{\IQbar}{\overline{\bf Q}}
\newcommand{\IZ}{{\bf Z}}
\newcommand{\IQ}{\mathbf{Q}}
\newcommand{\ssm}{\smallsetminus}
\newcommand{\re}[1]{\rm Re({#1})}
\newcommand{\height}[1]{h({#1})}
\newcommand{\cO}{{\mathcal O}}
\newcommand{\cF}{{\mathcal F}}
\begin{document}

\title{Singular Moduli that are Algebraic Units}
\author[P. Habegger]{P. Habegger}
\address{{Fachbereich Mathematik,} 
{Technische Universit\"at Darmstadt,}
 Schlossgartenstra\ss e 7,
{64289 Darmstadt, Germany}}
\email{habegger@mathematik.tu-darmstadt.de}

\subjclass[2010]{11G18 (primary), and 11G50, 11J86, 14G35, 14G40 (secondary)}

\begin{abstract}
  We prove that  only finitely many $j$-invariants of elliptic
  curves with complex multiplication are
  algebraic units. A rephrased and generalized version of this result
resembles 
 Siegel's Theorem on integral points of algebraic curves.
\end{abstract}
\maketitle
\section{Introduction}

A singular modulus  is 
 the $j$-invariant of an elliptic curve with
complex multiplication; we treat them as complex numbers in this note.  
They are precisely the values 
of  Klein's modular function
$j:\IH\rightarrow \IC$  at imaginary quadratic arguments; 
here $\IH$ denotes the upper half-plane in $\IC$. 
For example, $j(\sqrt{-1}) = 1728$.
Singular moduli are algebraic integers
and their entirety
is stable under ring automorphisms of $\IC$. 
We refer to Lang's book \cite{Lang:elliptic} for such classical facts. 

At the AIM workshop on unlikely intersections in algebraic groups and
Shimura varieties in Pisa, 2011 David Masser, motivated by \cite{BiMaZa13}, asked if there are only
finitely many singular moduli that are algebraic units. Here we
provide a positive answer to this question. 

\begin{theorem}
\label{thm:junit}
At most finitely many singular moduli are algebraic units. 
\end{theorem}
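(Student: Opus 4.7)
My plan is to reformulate ``$j(\tau)$ is an algebraic unit'' as the vanishing of a sum of logarithms over the Galois orbit of $j(\tau)$, and then to show this sum grows to $+\infty$ as the discriminant $D$ of $\tau$ tends to $-\infty$.

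Setup and positive main term. Every singular modulus is an algebraic integer, so $j(\tau)$ is a unit if and only if $\left|N_{K/\IQ}(j(\tau))\right| = 1$, where $K=\IQ(j(\tau))$, equivalently
\[
S(D) := \sum_{f}\log|j(\tau_f)| = 0,
\]
where $f = [a,b,c]$ ranges over the $h(D)$ proper equivalence classes of primitive positive-definite binary quadratic forms of discriminant $D$, and $\tau_f = (-b+i\sqrt{|D|})/(2a)\in\IH$ is the corresponding reduced CM point. The Fourier expansion $j(\tau) = q^{-1}+744+O(|q|)$ with $q=e^{2\pi i\tau}$ gives $\log|j(\tau)| = 2\pi\,\mathrm{Im}(\tau)+O(1)$ for $\mathrm{Im}(\tau)\geq 2$, so each reduced form with $a\leq \sqrt{|D|}/4$ contributes $\pi\sqrt{|D|}/a + O(1)$ to $S(D)$. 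In particular, the principal form alone (with $a=1$) contributes $\pi\sqrt{|D|}-O(1)$.

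Negative terms. The only obstruction to this positive main term would be conjugates $\tau_f$ close to the $\mathrm{SL}_2(\IZ)$-orbit of $\rho = e^{2\pi i/3}$, since $j$ has a triple zero at $\rho$ (and $|j(i)|=1728$, so $i$ is no issue). For any reduced form of discriminant $D\neq -3$, the identity $|D|=4ac-b^2$ together with $|b|\leq a\leq c$ forces $|D|-3a^2\geq 1$, and an elementary estimate then gives $|\tau_f-\rho|\gg 1/|D|$, hence $\log|j(\tau_f)|\geq -3\log|D|-O(1)$ uniformly. The crude combination $S(D)\geq \pi\sqrt{|D|}-Ch(D)\log|D|$ is however insufficient, because by Siegel's theorem $h(D)=|D|^{1/2+o(1)}$.

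Main obstacle. To close this gap I would appeal to equidistribution of the CM points of discriminant $D$ on $\mathrm{SL}_2(\IZ)\backslash\IH$ with respect to the hyperbolic probability measure (Duke's theorem). This implies that only $O(\epsilon^2 h(D))$, plus a small equidistribution error, of the conjugates lie within hyperbolic distance $\epsilon$ of $\rho$. A dyadic summation over $\epsilon$ decreasing to $\sim 1/|D|$ then bounds the total negative contribution to $S(D)$ by $o(\sqrt{|D|})$, so that the principal-form main term dominates and $S(D)\to+\infty$. The main difficulty I anticipate is making the equidistribution argument quantitative enough to be uniform for $\epsilon=\epsilon(|D|)\to 0$; this calls for an effective version of Duke's theorem with power-saving error term, via subconvex bounds for the relevant $L$-functions. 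A further subtlety is that $\log|j|$ is not integrable against the hyperbolic measure near the cusp, so the positive main term must be extracted directly from the Fourier expansion on individual small-$a$ reduced forms, rather than from any integral against $d\mu_{\mathrm{hyp}}$.
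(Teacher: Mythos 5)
Your norm reformulation, the extraction of the $\pi\sqrt{|D|}/a$ main terms from the $q$-expansion, and the elementary bound $|\tau_f-\rho|\gg 1/|D|$ (which plays exactly the role of the paper's Liouville step, Lemmas 3--4) are all sound, and your strategy is genuinely different from the paper's. The paper never looks at the principal form or at the quantity $\sqrt{|D|}$: its positive input is Colmez's theorem $\height{J}\ge c_2\log|\Delta|-c_3$ (Lemma 1), and the unit hypothesis is used to rewrite $\height{J}$ as $-\frac1D\sum_{|\sigma(J)|<1}\log|\sigma(J)|$. The decisive advantage is that this comparison needs Duke's theorem only at a single \emph{fixed} scale $\epsilon$: the at most $c_6\epsilon^{2/3}D$ conjugates with $|\sigma(J)|<\epsilon$ each contribute at most $c_8\log|\Delta|$ by Liouville, and since the Colmez lower bound and the Liouville upper bound have the same order $\log|\Delta|$, fixing $\epsilon$ with $c_6c_8\epsilon^{2/3}<c_2/2$ closes the argument. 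No effective or shrinking-scale equidistribution is required.

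Your final step has a genuine gap: the claim that the dyadic summation bounds the total negative contribution by $o(\sqrt{|D|})$ is false even under \emph{perfect} equidistribution at every scale. A conjugate at distance about $2^{-k}$ from $\rho$ contributes about $3k\log 2$ negatively, and there are about $2^{-2k}h(D)$ of them, so the dyadic sum is of size $h(D)\sum_k k\,2^{-2k}\asymp h(D)$ --- not $o(\sqrt{|D|})$. By the class number formula $h(D)\asymp\sqrt{|D|}\,L(1,\chi_D)$, and $L(1,\chi_D)$ is unbounded, so along suitable subsequences the negative part genuinely exceeds the principal form's $\pi\sqrt{|D|}$; the comparison you propose cannot close with the principal form alone. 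To repair it along your lines you would have to take the \emph{entire} positive part (all reduced forms with small $a$, counted with the multiplicity of admissible $b$'s) and show that it beats the constant $\frac{3}{\pi}\int_{|j|<1}\log|j|^{-1}y^{-2}\,dx\,dy$ times $h(D)$ coming from the bulk --- and, separately, control the equidistribution errors at scales down to $1/|D|$, which even with subconvexity degrade as a power of the inverse scale and do not obviously sum to less than the main term. Both repairs are possible (an effective proof in exactly this spirit exists), but as written the inequality you need does not hold, whereas the paper's fixed-$\epsilon$ argument anchored on Colmez's bound avoids the issue entirely.
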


Our theorem relies on several tools: Liouville's inequality from diophantine
approximation,  Duke's Equidistribution Theorem
\cite{Duke:hyperbolic}, its generalization due to Clozel-Ullmo
\cite{CU:Equidistribution},
and Colmez's lower bound for the Faltings height of an elliptic curve
with complex multiplication \cite{Colmez} supplemented by work of 
Nakkajima-Taguchi \cite{NT}.

A numerical computation involving {\tt sage} reveals  that no singular modulus of degree at
most $100$ over the rationals is an algebraic unit. There may be no
such units at all. 
Currently, there is no way to be sure  as
 Duke's Theorem is not known to be effective.

Below, we formulate and prove a general finiteness theorem reminiscent to
Siegel's Theorem on integral points on curves. 
We will see in particular  that there are only finitely singular moduli $j$
such that $j+1$ is a unit.  Now there are examples, as $j((\sqrt{-3}+1)/2)=0$ is a singular modulus.

Suppose that $X$ is a geometrically irreducible, smooth, projective curve defined
over a number field $F$. 
We write $F[X\ssm C]$ for the rational functions on $X$ that
are regular outside of a finite subset $C$ of $X(F)$. 
Let $\cO_F$ be the ring of algebraic integers of $F$.
A subset $M\subset X(F)\ssm C$ is called
quasi-integral with respect to $C$ if for any $f\in  F[X\ssm C]$ there
exists $\lambda\in  F\ssm\{0\}$ such that $\lambda f(M) \subset \cO_F$.
By clearing denominators one sees  that  quasi-integral
sets remain so after adding finitely many $F$-rational points. 
Siegel's Theorem, cf. Chapter 7 \cite{Serre:MordellWeil},  states that a quasi-integral sets is finite if
$C\not=\emptyset$ and the genus of $X$ is positive, or if $\# C\ge 3$.

Our extension of Theorem \ref{thm:junit} will deal with the question
of finiteness for  quasi-integral
sets of special points on modular curves. Special points generalize
singular moduli, we provide a definition below.  Only finitely many singular moduli
are rational over a fixed number field. Thus we adapt the notion
of quasi-integrality in the following way.  Let $\overline F$ be an algebraic
closure of $F$ and $\cO_{\overline F}$ the ring of algebraic integers in
$\overline F$.
We again work with a finite set $C\subset X(\overline F)$. 
A subset $M\subset X(\overline F)\ssm C$ is called
quasi-algebraic-integral with respect to $C$ if for all $f\in
\overline F[X\ssm C]$ there
is $\lambda\in \overline F\ssm\{0\}$ such that $\lambda f(M) \subset \cO_{\overline{F}}$.

Let us recall some classical facts about modular curves. 
 Let $\Gamma$ be a subgroup of ${\rm SL}_2(\IZ)$ that
contains the kernel of the reduction homomorphism ${\rm SL}_2(\IZ)\rightarrow {\rm
  SL}_2(\IZ/N\IZ)$ for an $N\ge 1$. These subgroups are
called congruence subgroups of ${\rm SL}_2(\IZ)$. They act on  $\IH$,
as does any subgroup of ${\rm SL}_2(\IR)$, by fractional linear transformations. 
The quotient $\IH/\Gamma$ can be equipped
with the structure of an algebraic curve $Y_\Gamma$ defined over a
number field $F$. This algebraic curve has  a natural compactification
$X_\Gamma$, which is a geometrically irreducible, projective, smooth curve over
$F$.
The points of $X_\Gamma \ssm Y_\Gamma$ are called the cusps of
$Y_\Gamma$. 
We remark that $Y(1) = Y_{{\rm SL}_2(\IZ)}$ is the affine
line, that the compactification is $\IP^1$, and that there is a single
cusp $\infty$.  The natural map $\phi:Y_\Gamma \rightarrow Y(1)$
is algebraic. A point of $Y_\Gamma(\overline F)$ is called special if
it maps to a singular modulus under $\phi$.

\begin{theorem}
\label{thm:modularsiegel}
  Let $\Gamma\subset {\rm SL}_2(\IZ)$ be a congruence subgroup and $F\subset\IC$
  a number field over which $Y_\Gamma$ is defined. Let $C\subset
  X_\Gamma(\overline F)$ be a finite set containing a point that is not a cusp of
  $Y_\Gamma$. 
Any set of special points in $Y_\Gamma(\overline F)$ that is
quasi-algebraic-integral with respect to $C$ is finite. 
\end{theorem}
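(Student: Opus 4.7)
The plan is to produce a test function on $X_\Gamma$ whose average log-value over the Galois orbit of any $x \in M$ is forced to be bounded below by quasi-algebraic-integrality but tends to $-\infty$ through the interplay of Duke--Clozel--Ullmo equidistribution, the Colmez--Nakkajima--Taguchi lower bound on the Faltings height of CM elliptic curves, and Liouville's inequality.

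\textbf{Choice of test function.} Fix a cusp $\infty_\Gamma$ of $Y_\Gamma$. Since $X_\Gamma$ is a smooth projective curve and $P_0 \in C$ is a non-cusp, Riemann--Roch applied to the divisor $n(P_0) - (\infty_\Gamma)$ for $n$ larger than the genus produces a non-constant rational function $h$ on $X_\Gamma$, defined over a finite extension of $F$, whose pole divisor is supported in $\{P_0\}$ (so that $h \in \overline F[X_\Gamma \ssm C]$) and which vanishes to some order $N \ge 1$ at $\infty_\Gamma$.

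\textbf{Upper bound.} Suppose for contradiction that $M$ is infinite. Quasi-algebraic-integrality applied to $h$ yields $\lambda \in \overline F^\times$ with $\lambda h(x) \in \cO_{\overline F}$ for all $x \in M$. Taking absolute norms of this nonzero algebraic integer and writing $K_x$ for a number field containing $\lambda$ and the coordinates of $x$, one obtains a constant $C_1 = C_1(\lambda)$, independent of $x$, with
\begin{equation*}
\frac{1}{[K_x:\IQ]}\sum_{\sigma\colon K_x \hookrightarrow \IC} \log|h(x^\sigma)| \ge -C_1.
\end{equation*}

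\textbf{Lower bound and the role of Colmez.} Extract a subsequence $x_n \in M$ for which the discriminants $D_n$ of the singular moduli $\phi(x_n)$ satisfy $|D_n| \to \infty$; I claim the left-hand side above tends to $-\infty$. Near $\infty_\Gamma$, in the local coordinate $q = e^{2\pi i \tau/w}$ (where $w$ is the cusp width), $\log|h| \sim -(2\pi N/w)\,\mathrm{Im}(\tau)$. By Clozel--Ullmo equidistribution of Galois orbits of special points on $Y_\Gamma$, combined with a truncation argument, the dominant contribution to the averaged sum is $-(2\pi N/w)$ times the Galois-average of $\mathrm{Im}\,\tau_n^\sigma$. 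Via the standard formula $h_F(E_\tau) = \pi\,\mathrm{Im}(\tau)/6 - (\log\mathrm{Im}(\tau))/2 + O(1)$ for $\mathrm{Im}(\tau)$ large, this average is essentially proportional to the average Faltings height of the CM elliptic curves $E_{\phi(x_n^\sigma)}$. By Colmez's formula, made effective by Nakkajima--Taguchi, that average is $\gg \log|D_n|$, so the averaged sum is at most $-c\log|D_n| + O(1) \to -\infty$, contradicting the lower bound $-C_1$.

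\textbf{Main obstacle.} The delicate step is the positive contribution of $\log|h|$ near the pole $P_0$, where it is unbounded above and the conjugates $x_n^\sigma$ may approach $P_0$. Liouville's inequality furnishes the diophantine bound $|u(x_n^\sigma)| \ge |D_n|^{-O(1)}$ for a local coordinate $u$ at $P_0$, so that each individual positive term is at most $O(\log|D_n|)$. Combined with equidistribution, which shows the fraction of $\sigma$ with $x_n^\sigma$ in a fixed neighbourhood of $P_0$ tends to the (small) $\mu_\Gamma$-measure of that neighbourhood, the positive pole contribution is dominated, by a strictly smaller constant times $\log|D_n|$, by the $-c\log|D_n|$ coming from the cusp.
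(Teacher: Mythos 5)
Your overall architecture (a function regular outside $C$ whose averaged $\log$ over the Galois orbit is bounded below by quasi-algebraic-integrality, with a negative main term controlled by Colmez--Nakkajima--Taguchi and the positive contribution at $P_0$ controlled by equidistribution plus a pointwise diophantine lower bound) matches the paper's strategy. But the diophantine input you invoke at $P_0$ is wrong, and this is precisely where the general theorem is harder than Theorem \ref{thm:junit}. You claim Liouville gives $|u(x_n^\sigma)|\ge |D_n|^{-O(1)}$ for a local coordinate $u$ at $P_0$. Liouville's inequality bounds $\log|\alpha|$ below by $-[\IQ(\alpha):\IQ]\,\height{\alpha}$; applied to $\alpha=u(x_n^\sigma)$ the degree is of order the class number, i.e. $|D_n|^{1/2+o(1)}$, so the resulting bound is $\exp(-|D_n|^{1/2+o(1)})$, hopelessly weaker than $|D_n|^{-O(1)}$. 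Theorem \ref{thm:junit} escapes this by transferring to the upper half-plane, comparing $\tau_\sigma$ (imaginary quadratic, degree $2$, height $O(\log|\Delta|)$ by Lemma \ref{lem:htaubound}) with the root of unity $\zeta$; Liouville applies there because \emph{both} numbers are algebraic of degree at most $2$. For a general non-cusp $P_0\in C$ the preimage $\eta\in\IH$ of $J_0=\phi(P_0)$ is transcendental whenever $J_0$ is not a singular modulus (Schneider's theorem), so Liouville says nothing about $|\tau_\sigma-\eta|$. The paper replaces it by David and Hirata-Kohno's lower bound for the linear form $\tau_\sigma\omega_1-\omega_2$ in two periods of an elliptic curve with $j$-invariant $J_0$ (Lemmas \ref{lem:DHK} and \ref{lem:dioapprox}); the crucial feature is that the bound is $-c\log B$ with $\log B\asymp\log|\Delta|$ --- the paper notes that even $-c(\log B)(\log\log B)$ would lose to the Colmez term. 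Without this replacement your pole contribution is not dominated and the argument collapses, except in the special case where $P_0$ is itself a special point.

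Two smaller repairs. First, your test function must vanish at \emph{all} cusps (equivalently, at all poles of $\phi$), not just at one chosen cusp $\infty_\Gamma$: a conjugate $x_n^\sigma$ with $\mathrm{Im}\,\tau_\sigma$ large may approach any cusp of $X_\Gamma$, and near the other cusps $\log|h|$ would be bounded below, destroying your negative main term; the paper's $\psi$ is built to vanish at every pole of $\phi$. Second, equidistribution is not what makes the cusp contribution proportional to the Galois average of $\mathrm{Im}\,\tau_\sigma$ --- the function $\mathrm{Im}\,\tau$ is not $\mu$-integrable on $\cF$ and that average tends to infinity, so Clozel--Ullmo cannot evaluate it; that step needs only the $q$-expansion together with the height (or Faltings height) lower bound. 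Equidistribution is needed solely to show that the proportion of conjugates landing in a small neighbourhood of $P_0$ is bounded by a fixed positive power of its radius.
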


We require $C$ to contain a non-cusp for good reason. Indeed, as singular moduli
are algebraic integers, their totality is a
quasi-algebraic-integral subset of $Y(1)(\IQbar)$ with respect to 
$C = \{\infty\}$.
We recover Theorem \ref{thm:junit} from Theorem \ref{thm:modularsiegel}
on taking $\Gamma = {\rm SL}_2(\IZ)$ and   $C = \{0,\infty\}$.

The proof of Theorem \ref{thm:modularsiegel} relies on the same basic
strategy  as Theorem \ref{thm:junit}. However, instead of the
Liouville inequality we
require David and Hirata-Kohno's  sharp lower bound for linear forms
in elliptic logarithms \cite{DHK}.  
Earlier, Masser and others obtained lower bounds in this setting
after A. Baker's initial work 
on  linear forms in classical logarithms. 

Our theorems  are reminiscent to M. Baker, Ih, and Rumely's
result \cite{BaIhRu} on  roots of unity that are $S$-integral relative
to a divisor of $\IG_m$. Indeed, both finiteness results are based on
an equidistribution statement. 
However, the Weil height of a root of unity is
zero, whereas the  height of a singular modulus can be arbitrarily
large. 
Indeed, the quality of Colmez's  growth estimate for the Faltings height
 plays a crucial role in our argument. 
Moreover, finiteness need not hold in the multiplicative setting 
 if the support of the divisor consists of roots of unity. 
This is in contrast to Theorem \ref{thm:junit} where the support of
the corresponding divisor is the singular modulus $0$. 
Finally, our work considers only  the case where $S$ consists only
of the Archimedean places whereas M. Baker, Ih, and Rumely also allow finite
places.

The author would like to thank the organizers of the AIM workshop in
Pisa, 2011 for providing a stimulation environment. He also thanks Su-ion Ih for helpful
remarks concerning his paper with M. Baker and Rumely.  

\section{Unitary Singular Moduli} 
\label{sec:usm}

In this section $c_1,c_2,\ldots$ denote positive and absolute
constants.

Let $K$ be a number field. A finite place $\nu$ of $K$ is a non-Archimedean
absolute value that restricts to the $p$-adic absolute value on $\IQ$
for some prime $p$. 
With this normalization we have $|p|_\nu = 1/p$. 
The completion of $K$ with respect to $\nu$ is
a field extension of degree $d_\nu$ of the completion of $\IQ$ with
 respect to the  $p$-adic absolute value. 
Let $J$ be an algebraic number in a number field $K$.
The absolute logarithmic Weil height of $J$, or just height for short, is 
\begin{equation*}
  \height{J} = \frac{1}{[K:\IQ]}\left(\sum_{\sigma}
\log\max\{1,|\sigma(J)|\} + \sum_{\nu} d_\nu \log\max\{1,|J|_\nu\}\right)
\end{equation*}
where  $\sigma$ runs over all field embeddings
$\sigma:K\rightarrow\IC$
and $\nu$ runs over all finite places of $K$.
It is well-known that $\height{J}$ does not change when replacing $K$
by another number field containing $J$. For this and other facts
on heights we refer to Sections 1.5 and 1.6 of Bombieri and Gubler's book 
\cite{BG}. 

We state a height lower bound for singular moduli that follows easily from results of   Colmez and
Nakkajima-Taguchi. 

\begin{lemma}
\label{lem:colmez}
Let  $J$ 
be a singular modulus attached to an elliptic curve 
whose endomorphism ring is an order with discriminant
$\Delta<0$. 
Then 
  \begin{equation}
\label{eq:colmez}
    \height{J} \ge c_2 \log|\Delta| - c_3.
  \end{equation}
\end{lemma}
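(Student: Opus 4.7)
The plan is to route the desired lower bound through the (stable) Faltings height $h_F(E)$ of an elliptic curve $E/\IQbar$ with $j(E) = J$, where Colmez's theorem provides a link between $h_F(E)$ and $\log|\Delta|$.

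First, I would invoke the standard comparison between the Faltings height and the Weil height of the $j$-invariant. The formulas of Faltings and Deligne expressing $h_F$ in terms of the minimal discriminant and the archimedean periods, as packaged by Silverman, yield an absolute-constant inequality of the shape
\[
\bigl| h_F(E) - \tfrac{1}{12}h(j(E)) \bigr| \le c\,\log\bigl(3 + h(j(E))\bigr) + c'.
\]
This reduces the lemma to producing a lower bound of the form $h_F(E) \ge c\log|\Delta| - c'$, since the logarithmic term is dominated by the linear one for large $h(J)$ and can otherwise be absorbed into the constants of the lemma.

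Next I would apply Colmez's theorem to bound $h_F(E)$ from below. In the base case where the endomorphism ring is the full ring of integers $\cO_K$ of $K=\IQ(\sqrt{\Delta})$, so that $\Delta$ is the fundamental discriminant $d_K$, Colmez's formula writes $h_F(E)$ as an explicit combination of $\log|d_K|$ and the logarithmic derivative of the Dirichlet $L$-function $L(s,\chi_K)$ at $s=0$. Combining this with the functional equation and the unconditional convexity bound $L(1,\chi_K) \ll \log|d_K|$ yields $h_F(E) \ge \tfrac{1}{4}\log|d_K| - c$. For a non-maximal order of conductor $f$, so $\Delta = f^2 d_K$, I would invoke Nakkajima--Taguchi's comparison between $h_F(E)$ and $h_F(E')$, where $E'$ has CM by $\cO_K$ and is isogenous to $E$: their computation yields $h_F(E) - h_F(E') \ge c\log f - c'$, and combined with the maximal-order bound this produces $h_F(E) \ge c\log|\Delta| - c'$ in general.

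Substituting this into the comparison of the first step proves \eqref{eq:colmez}. I expect the main obstacle to be the non-maximal case of the second step: Colmez's theorem in its original formulation applies only to CM by a full ring of integers, so recovering the correct $\log|\Delta|$ dependence for a general CM order requires the precise computation of Nakkajima--Taguchi, which tracks how the archimedean term in the Faltings height transforms under the isogeny from $E$ to an elliptic curve with CM by $\cO_K$.
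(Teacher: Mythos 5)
Your proposal is correct and follows essentially the same route as the paper: Colmez's theorem for the fundamental-discriminant case, the Nakkajima--Taguchi conductor correction (whose $\frac12\log f$ main term dominates the $\sum_{p\mid f}e_f(p)\log p$ defect, which is only $O(\log\log f)$) for non-maximal orders, and Silverman's comparison between the Faltings height and $h(j)$ to conclude. The only difference is cosmetic ordering of the steps.
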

\begin{proof}
We  write $\Delta = \Delta_0 f^2$ where $\Delta_0<0$ is a
fundamental discriminant and $f$ is the conductor of the endomorphism
ring of $E$, an elliptic curve attached to $j$.
  Colmez \cite{Colmez} proved (\ref{eq:colmez})
with $\height{J}$ replaced by the stable Faltings height of $E$ when $\Delta$
is  a fundamental discriminant, i.e. if $f=1$. 
For $f>1$ Nakkajima and Taguchi \cite{NT}  found 
that one must add
\begin{equation*}
\frac 12    \log f - \frac 12 \sum_{p|f} e_f(p)\log p
\end{equation*}
to the Faltings height;
here the sum runs over prime divisors $p$ of $f$ and 
\begin{equation*}
  e_f(p) = \frac{1-\chi(p)}{p-\chi(p)}\frac{1-p^{-n}}{1-p^{-1}}
\end{equation*}
if $p^n\mid f$ but $p^{n+1}\nmid f$ and $\chi(p)$ is Kronecker's
symbol
$(\frac{\Delta_0}{p})$. Now $\sum_{p|f}e_f(p)\log p \le c_1
\log\log\max\{3,f\}$
by the arguments in the proof of Lemma 4.2 \cite{habegger:wbhc}.
Therefore, the Faltings height of $E$ is bounded from below
logarithmically in terms of $|\Delta_0 f^2|=|\Delta|$. 

Silverman's  Proposition 2.1 \cite{Silverman:HEC} allows us to 
replace the Faltings height by $\height{J}$
at the cost of adjusting the constants. 
\end{proof}

Our strategy to prove Theorem \ref{thm:junit} is as follows.
Let $J$ and $\Delta$ be as in Lemma \ref{lem:colmez}.
Assume in addition that $J$ is an algebraic
 unit.  We will find an upper bound for $\height{J}$ that contradicts
 the previous lemma for sufficiently large $|\Delta|$.
This will leave us with only  finitely many $\Delta$ and hence
 finitely many  $J$, as we will see.

The norm of $J$ is $\pm 1$ and the finite places do not contribute to
the height of the algebraic integer $J$.
Thus we can rewrite
\begin{equation}
\label{eq:rewrittenheight}
  \height{J} =
 \frac{1}{D} \sum_{|\sigma(J)|>1}
  \log|\sigma(J)|
=
-\frac{1}{D} \sum_{|\sigma(J)|<1}
  \log|\sigma(J)|
\end{equation}
where $D=[\IQ(J):\IQ]$ and where the sums run over  field embeddings
$\sigma:\IQ(J)\rightarrow\IC$.

For each $\sigma$ we have $\sigma(J)=j(\tau_\sigma)$ for some
$\tau_\sigma$  in the  classical fundamental domain 
\begin{equation*}
  \cF = \left\{\tau\in \IH;\,\, \re{\tau}\in (-1/2, 1/2],\, |\tau|\ge 1\text{
    and }\re{\tau}\ge 0 \text{ if }|\tau|=1\right\}
\end{equation*}
 of the action of ${\rm SL}_2(\IZ)$ on $\IH$. 

To bound the right-hand side of (\ref{eq:rewrittenheight}) from above we must control those
conjugates $\sigma(J)$ that are small in modulus. 
Let  $\epsilon \in (0,1]$ be a parameter that is to be determined;
the $c_i$ will not depend on $\epsilon$.  We define 
\begin{equation*}
  \Sigma_\epsilon = \{\tau\in \cF;\,\, |j(\tau)|<\epsilon \}.
\end{equation*}
The field embeddings that contribute most to the height of $J$ are in 
\begin{equation*}
  \Gamma_\epsilon = \{\sigma:\IQ(J)\rightarrow \IC;\,\, \tau_\sigma \in
  \Sigma_\epsilon \}.
\end{equation*}
We  estimate  their  number 
using equidistribution in the next lemma. 

\begin{lemma}
\label{lem:equi}
  We have $\#\Gamma_\epsilon  \le c_6 \epsilon^{2/3} D$
if $D$ is sufficiently large with respect to $\epsilon$. 
\end{lemma}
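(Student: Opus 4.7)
The plan is to combine a local analysis of the vanishing of $j$ at the zero $\rho := (1+\sqrt{-3})/2 \in \cF$ with Duke's equidistribution theorem, in the generality due to Clozel and Ullmo. First I observe that $j$ has a triple zero at $\rho$ and no other zero in $\cF$, while $|j(\tau)|\to\infty$ as $\tau$ approaches the cusp $\infty$ inside $\cF$. A power series expansion around $\rho$ therefore yields a constant $c_4>0$ and a neighborhood $U$ of $\rho$ such that $|j(\tau)|\ge c_4|\tau-\rho|^3$ on $U$, while $|j|$ is bounded below by a positive constant on $\cF\ssm U$. Hence for all sufficiently small $\epsilon$, the set $\Sigma_\epsilon$ is contained in a Euclidean disk $B_\epsilon$ centered at $\rho$ of radius $O(\epsilon^{1/3})$; and because the hyperbolic density $y^{-2}$ is bounded near $\rho$, the dilated disk $2B_\epsilon$ has hyperbolic area $O(\epsilon^{2/3})$. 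The remaining range $\epsilon\in[\epsilon_0,1]$ is handled trivially after enlarging the constant.

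Next I invoke the Duke--Clozel--Ullmo theorem: as $|\Delta|\to\infty$, the $D$ Galois conjugates $\tau_\sigma\in\cF$ of $J$ equidistribute with respect to the normalized hyperbolic probability measure $\mu$ on $\cF$. Because the class number of the order of discriminant $\Delta$ equals $D$ and is at most $|\Delta|^{1/2+o(1)}$, the requirement that $D$ be large relative to $\epsilon$ forces $|\Delta|$ to be large as well, so the hypothesis of the equidistribution theorem is met. Choosing a continuous bump function $\chi_\epsilon$ with $\mathbf{1}_{\Sigma_\epsilon}\le\chi_\epsilon\le\mathbf{1}_{2B_\epsilon}$, one obtains
\[ \#\Gamma_\epsilon \;\le\; \sum_{\sigma}\chi_\epsilon(\tau_\sigma) \;=\; D\!\int_\cF\chi_\epsilon\,d\mu+o(D) \;\le\; D\bigl(\tfrac{1}{2}c_6\,\epsilon^{2/3}+o(1)\bigr), \]
where the $o(1)$ tends to $0$ as $D\to\infty$ with $\epsilon$ held fixed. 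Absorbing this $o(1)$ into the leading term once $D$ is sufficiently large relative to $\epsilon$ yields the desired bound $\#\Gamma_\epsilon\le c_6\,\epsilon^{2/3}D$.

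The main obstacle is conceptual rather than computational: Duke's theorem is ineffective, and this ineffectivity is inherited by the clause ``$D$ sufficiently large with respect to $\epsilon$''. This is the ultimate source of the ineffectivity in Theorem \ref{thm:junit}. The other ingredients --- the cubic vanishing of $j$ at $\rho$ and the standard regularization that replaces the indicator of $\Sigma_\epsilon$ by a continuous test function (required because the equidistribution is formulated against continuous functions) --- are elementary and present no real difficulty.
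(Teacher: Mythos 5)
Your proposal follows the same route as the paper's proof: cubic vanishing of $j$ at its zero, a bound of order $\epsilon^{2/3}$ on the hyperbolic measure of $\Sigma_\epsilon$ (using that $y^{-2}$ is bounded on $\cF$), and Duke/Clozel--Ullmo equidistribution to convert this into a count of conjugates. Your explicit regularization by a continuous bump function, and your observation that requiring $D$ large forces $|\Delta|$ large (so the equidistribution hypothesis is met), are points the paper leaves implicit, and you handle them correctly.

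There is, however, one concrete error in the local analysis. The fundamental domain $\cF$ used here is not closed: the point $\zeta^2=(-1+\sqrt{-3})/2$ lies in $\overline\cF\ssm\cF$ (its real part is exactly $-1/2$), it is a limit of points of $\cF$ with real part slightly larger than $-1/2$ and modulus slightly larger than $1$, and $j(\zeta^2)=0$. Hence your claim that $|j|$ is bounded below by a positive constant on $\cF\ssm U$, for $U$ a neighborhood of $\rho=(1+\sqrt{-3})/2$ alone, is false, and $\Sigma_\epsilon$ is \emph{not} contained in a single disk about $\rho$: for every $\epsilon>0$ it has a second component clustering at $\zeta^2$, at Euclidean distance $1$ from $\rho$. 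The repair is immediate and is exactly what the paper does: work on the closure $\overline\cF$, where $j$ has triple zeros at both $\rho$ and $\zeta^2$ and no other zeros, obtain $|j(\tau)|\ge c\,\min\{|\tau-\rho|,|\tau-\zeta^2|\}^3$ (using that $\tau$ cannot be close to both points simultaneously), and cover $\Sigma_\epsilon$ by two disks of radius $O(\epsilon^{1/3})$. The measure bound $O(\epsilon^{2/3})$ and the rest of your argument then go through unchanged.
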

\begin{proof}
Let $\mu$ denote the hyperbolic measure on $\cF$ with total mass $1$, i.e.
\begin{equation}
\label{eq:measurebound}
  \mu(\Sigma) = \frac{3}{\pi}\int_{x+yi\in \Sigma} \frac{dxdy}{y^2}
\end{equation}
for a measurable subset $\Sigma \subset \cF$.
Duke \cite{Duke:hyperbolic} 
proved that the $\tau_\sigma$ are equidistributed with respect to
$\mu$ as $\Delta\rightarrow -\infty$ runs over fundamental discriminants. 
For general discriminants equidistribution follows from a 
result  of
Clozel and Ullmo \cite{CU:Equidistribution}.
So $|\#\Gamma_\epsilon/D - \mu(\Sigma_\epsilon)|
\rightarrow 0$ as $\Delta\rightarrow -\infty$. 
To prove the lemma we will bound $\mu(\Sigma_\epsilon)$ in terms of
$\epsilon$.

Let $\zeta$ be the unique root of unity in $\IH$ of order $6$.
By
Theorem 2, Chapter 3 \cite{Lang:elliptic}  Klein's modular function has a triple zero at
$\zeta$ and at $\zeta^2$ and does not vanish anywhere  else on
$\overline\cF$,
the closure of $\cF$ in $\IH$.
So $\tau\mapsto
j(\tau)(\tau-\zeta)^{-3}(\tau-\zeta^2)^{-3}$
does not vanish on $\overline\cF$. 
Using the $q$-expansion
\begin{equation*}
  j(\tau) = \frac 1q + 744 + 196884q + \cdots \quad\text{with}\quad
q=e^{2\pi \sqrt{-1} \tau}
\end{equation*}
we see that
 $|j(\tau)|$ grows exponentially in $|\tau|$  if
$|\tau|\rightarrow\infty$ in $\overline{\cF}$. 
So
\begin{equation}
\label{eq:jzlb}
 |j(\tau)| \ge c_4 |\tau-\zeta|^3 |\tau-\zeta^2|^3 \ge \frac{c_4}{8} 
\min \{|\tau-\zeta|,|\tau-\zeta^2|\}^3
\quad\text{for all}\quad \tau\in\overline{\cF}
\end{equation}
where  $\max\{|\tau-\zeta|,|\tau-\zeta^2|\}\ge |\zeta-\zeta^2|/2=1/2$
was used in the second inequality.
Because the imaginary part of an element in $\cF$ 
is at least $\sqrt{3}/2$
we can use (\ref{eq:measurebound}) to estimate
  $\mu(\Sigma_\epsilon) \le c_5 \epsilon^{2/3}$.
\end{proof}

Using this lemma with (\ref{eq:rewrittenheight})
we can bound the height of $J$ from above as
  \begin{alignat}1
\nonumber
    \height{J} &= -\frac 1D\left( \sum_{\substack{ |\sigma(J)|<\epsilon }}      
  \log |\sigma(J)|
+\sum_{ \epsilon \le |\sigma(J)|<1 }
\log|\sigma(J)| \right)\\
\label{eq:heightjup1}
&\le c_6  \epsilon^{2/3} \max_{|\sigma(J)|<\epsilon}  \log(|\sigma(J)|^{-1})
+ |\log\epsilon|.
  \end{alignat}

Soon we will use Liouville's inequality from diophantine approximation to bound 
$|j(\tau_\sigma)|$ from below if $\sigma \in \Gamma_\epsilon$.
To do this we first require a bound for  the height of $\tau_\sigma$.

\begin{lemma}
\label{lem:htaubound}
  Each $\tau_\sigma$ is imaginary quadratic and
$\height{\tau_\sigma}\le  \log\sqrt{|\Delta|}$. 
\end{lemma}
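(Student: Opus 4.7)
The plan is to identify $\tau_\sigma$ as a root of a primitive integral quadratic form of discriminant $\Delta$ whose reduction in the fundamental domain constrains its leading and constant coefficients, then compute the Weil height directly from that minimal polynomial.

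First, I would recall that for any embedding $\sigma : \IQ(J) \to \IC$, the conjugate $\sigma(J)$ is again a singular modulus attached to an elliptic curve with CM by the \emph{same} order of discriminant $\Delta$ (the Galois action permutes the $j$-invariants within a fixed CM type). Thus $\sigma(J) = j(\tau_\sigma)$ where the lattice $\IZ + \IZ \tau_\sigma$ is homothetic to a proper ideal of that order. It follows that $\tau_\sigma$ is a root of a primitive integral binary quadratic form $aT^2 + bT + c$ with $a>0$, $\gcd(a,b,c) = 1$, and $b^2 - 4ac = \Delta$. In particular $\tau_\sigma$ is imaginary quadratic.

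Second, since $\tau_\sigma\in\cF$, the associated form is reduced in the classical sense: the constraints $\re(\tau_\sigma) \in (-1/2, 1/2]$ and $|\tau_\sigma| \ge 1$ translate, via $\tau_\sigma = (-b + i\sqrt{|\Delta|})/(2a)$, into $|b| \le a \le c$. I would then compute the absolute logarithmic Weil height of the quadratic number $\tau_\sigma$ from its minimal polynomial: both Galois conjugates have modulus $\sqrt{c/a} \ge 1$, so
\begin{equation*}
\height{\tau_\sigma} = \frac{1}{2}\Bigl(\log a + \log\max\{1,|\tau_\sigma|\} + \log\max\{1,|\bar\tau_\sigma|\}\Bigr) = \frac{1}{2}\log c.
\end{equation*}

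Third, the reduction inequalities $|b| \le a \le c$ give $b^2 \le a^2 \le ac$, hence
\begin{equation*}
|\Delta| = 4ac - b^2 \ge 4ac - ac = 3ac \ge 3c,
\end{equation*}
so $c \le |\Delta|/3 \le |\Delta|$. Combining with the previous display yields $\height{\tau_\sigma} \le \frac{1}{2}\log|\Delta| = \log\sqrt{|\Delta|}$, as required.

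The only subtle point is the correct bookkeeping in step one: making sure that $\tau_\sigma$ indeed corresponds to a \emph{primitive} form of discriminant exactly $\Delta$ (not a proper divisor), which uses that $\sigma(J)$ has the same endomorphism order as $J$. Everything else is routine reduction theory of binary quadratic forms and the definition of the Weil height of a quadratic algebraic number.
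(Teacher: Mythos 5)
Your proof is correct, and it follows the same basic strategy as the paper --- produce an integral quadratic polynomial of discriminant $\Delta$ vanishing at $\tau_\sigma$, use the fundamental-domain constraints to control its coefficients, and read off the height from the Mahler measure --- but the two arguments differ in how they execute this. The paper derives the equation $b\tau^2+(a-d)\tau-c=0$ directly from the action of the order $\IZ+\omega f\IZ$ on the lattice $\IZ+\tau_\sigma\IZ$; the resulting polynomial need not be primitive, so the paper only uses that $2\height{\tau_\sigma}$ is \emph{at most} its logarithmic Mahler measure and then bounds $|b|$ and $|\tau_\sigma|^2$ separately. You instead invoke the classical correspondence between the Galois conjugates of a singular modulus and primitive reduced forms $(a,b,c)$ of discriminant exactly $\Delta$, which yields the exact value $\height{\tau_\sigma}=\tfrac12\log c$ and, via $|\Delta|\ge 3ac\ge 3c$, the marginally sharper bound $\tfrac12\log(|\Delta|/3)$. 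The price is the input you correctly flag as the subtle point: you need that $\sigma(J)$ has CM by the \emph{same} order, so that the form is primitive of discriminant $\Delta$ rather than of a smaller discriminant dividing it. That fact is standard (the endomorphism ring is preserved under ring automorphisms of $\IC$), so your argument is complete; the paper's version simply avoids needing it.
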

\begin{proof}
We abbreviate $\tau=\tau_\sigma$
and decompose 
$\Delta=\Delta_0 f^2$ as in the proof of Lemma \ref{lem:colmez}. 
The endomorphism ring mentioned in the said lemma  can be identified with $\IZ+\omega f\IZ\subset \IC$ where
$\omega = (\sqrt{\Delta_0}+\Delta_0)/2$. 
This ring acts on the  lattice $\IZ+\tau\IZ$. So 
there exist $a,b,c,d\in\IZ$ with $\omega f = a+b\tau$,
$\omega f \tau = c+d\tau$ and $b\not=0$. We substitute the first
equality into  the second one and obtain
\begin{equation}
\label{eq:tauequation}
  b\tau^2 + (a-d)\tau - c=0. 
\end{equation}
Of course,  $\tau$ is imaginary quadratic. 
We observe that $\omega f$ is a root of $T^2 -(a+d)T + ad-bc$. The 
 discriminant of this quadratic polynomial is $(a+d)^2-4(ad-bc) =
 (\omega-\overline\omega)^2f^2
= \Delta $.  Hence
$\tau = ( -(a-d)\pm \sqrt{\Delta})/2b$
and therefore $|\tau|^2  =((a-d)^2 +|\Delta|)/(2b)^2$.

As $\tau$ lies in $\cF$ we have $|\tau|\ge 1$ and
$|\re{\tau}|\le 1/2$. The second inequality implies $|a-d|\le
|b|$ and hence $|\tau|^2\le (b^2 + |\Delta|)/(2b)^2$.
By Proposition 1.6.6  \cite{BG} the value $2\height{\tau}$ is at most  the logarithmic Mahler measure
of $bT^2+(a-d)T-c$. So $2\height{\tau}
\le\log(|b||\tau|^2) \le \log(|b|/4 + |\Delta|/(4|b|))$. 
 The
imaginary part of $\tau$ is at least $\sqrt{3}/2$ and so
$|b|\le \sqrt{|\Delta|/3}$.
As $x\mapsto x + |\Delta|/x$ is decreasing on
$[1,\sqrt{|\Delta|}]$ we conclude
 $2\height{\tau}\le \log((1+|\Delta|)/4) \le \log|\Delta|$.
\end{proof}

Now we use Liouville's inequality to bound the conjugates of $J$ away
from zero. 

\begin{lemma}
We have $\log |\sigma(J)| \ge -c_8\log |\Delta|$
for any $\sigma:\IQ(J)\rightarrow\IC$. 
\end{lemma}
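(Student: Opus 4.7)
The plan is to combine the analytic lower bound (\ref{eq:jzlb}) with Liouville's inequality applied to the algebraic numbers $\tau_\sigma - \zeta$ and $\tau_\sigma - \zeta^2$. Since $j$ vanishes on $\overline{\cF}$ only at $\zeta$ and $\zeta^2$, we may assume $\sigma(J) \neq 0$; the excluded case $J = 0$ accounts for a single singular modulus and can be absorbed into the constants.

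First I would bound the heights of the two relevant differences. By Lemma \ref{lem:htaubound}, $\tau_\sigma$ is imaginary quadratic with $\height{\tau_\sigma} \le \tfrac{1}{2}\log|\Delta|$. Since $\zeta$ is a root of unity, $\height{\zeta} = \height{\zeta^2} = 0$, and standard height inequalities yield
\begin{equation*}
\height{\tau_\sigma - \zeta},\ \height{\tau_\sigma - \zeta^2} \le \tfrac{1}{2}\log|\Delta| + \log 2,
\end{equation*}
with each of these algebraic numbers of degree at most $4$ over $\IQ$.

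Next I would invoke Liouville's inequality in the form $\log|\alpha| \ge -[\IQ(\alpha):\IQ]\,\height{\alpha}$, valid for any nonzero algebraic $\alpha$ under a complex embedding; this is immediate from the product formula $\sum_v d_v \log|\alpha|_v = 0$ together with the definition of the height. Applied to $\alpha = \tau_\sigma - \zeta$ and $\alpha = \tau_\sigma - \zeta^2$ (both nonzero by our assumption $\sigma(J) \neq 0$) this produces
\begin{equation*}
\log\min\{|\tau_\sigma-\zeta|,\,|\tau_\sigma-\zeta^2|\} \ge -2\log|\Delta| - 4\log 2.
\end{equation*}
Substituting into (\ref{eq:jzlb}) multiplies the right-hand side by $3$ and adds the bounded term $\log(c_4/8)$, yielding a lower bound of shape $-6\log|\Delta| + O(1)$. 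Enlarging $c_8$ and treating the finitely many small $|\Delta|$ trivially gives the claim.

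I expect no genuine obstacle. The only points requiring care are the verification that the algebraic numbers entering Liouville's inequality are nonzero (which we arrange by excluding $\sigma(J) = 0$) and the book-keeping concerning complex versus real Archimedean places, which at most costs a harmless factor of $2$ and is absorbed into $c_8$.
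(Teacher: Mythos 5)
Your proposal is correct and follows essentially the same route as the paper: both combine the lower bound (\ref{eq:jzlb}) near the zeros $\zeta,\zeta^2$ of $j$ with Liouville's inequality and the height bound $\height{\tau_\sigma}\le\tfrac12\log|\Delta|$ from Lemma \ref{lem:htaubound}; your version applies Liouville to the difference $\tau_\sigma-\zeta$ after bounding its height, while the paper cites the equivalent two-variable form from Bombieri--Gubler. The only cosmetic point is that the case $\sigma(J)=0$ need not be ``absorbed into constants'': it simply cannot occur, since the standing assumption of the section is that $J$ is a unit.
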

\begin{proof}
We retain the notation of the proof of Lemma \ref{lem:equi} and assume $|\tau_\sigma-\zeta| \le |\tau_\sigma-\zeta^2|$;
the reverse case is similar. 
According to (\ref{eq:jzlb}) we have
\begin{equation}
\label{eq:jlowerbound}
|\sigma(J)| = |j(\tau_\sigma)|
\ge c_7 |\tau_\sigma-\zeta|^3.
\end{equation}
 We also remark
$\tau_\sigma\not=\zeta$ since   $\sigma(J)\not=0=j(\zeta)$.
 Liouville's inequality, Theorem 1.5.21 \cite{BG}, tells us
\begin{equation*}
  -\log |\tau_\sigma-\zeta| \le
  [\IQ(\tau_\sigma,\zeta):\IQ](\height{\tau_\sigma}+\height{\zeta}+\log
  2).
\end{equation*}
But $\tau_\sigma$ and $\zeta$ are imaginary quadratic, so
$[\IQ(\tau_\sigma,\zeta):\IQ]\le 4$. Moreover,  $\height{\zeta}=0$ as
$\zeta$ is  a root of unity. The bound for  $\height{\tau_\sigma}$
from
 Lemma \ref{lem:htaubound} yields
\begin{equation*}
-\log |\tau_\sigma-\zeta| \le 4\log(2\sqrt{|\Delta|}).
\end{equation*}
The lemma now follows from  $|\Delta|\ge 3$ and
(\ref{eq:jlowerbound}).
\end{proof}

\begin{proof}[Proof of Theorem \ref{thm:junit}]
We will see soon how to fix $\epsilon$ in terms of the $c_i$. 
By a classical result of Heilbronn and Hecke there are only finitely
many singular moduli whose degree over $\IQ$ are bounded by a
prescribed constant. 
So there is no loss of generality if we assume that $D$ is large enough as in  Lemma \ref{lem:equi}

We use the previous lemma to bound the first term 
in (\ref{eq:heightjup1}) from above. Thus
\begin{equation*}
  \height{J} \le c_6c_8\epsilon^{2/3} \log |\Delta| +
  |\log\epsilon|.
\end{equation*}
We fix $\epsilon$ to
  satisfy $c_6c_8\epsilon^{2/3} < c_2/2$
where $c_2$ comes from the height lower bound in Lemma \ref{lem:colmez}.
With this choice we conclude that $|\Delta|$ is bounded from above by an absolute
constant. 
By Lemma \ref{lem:htaubound} and Northcott's Theorem there are only finitely many
possible $\tau_\sigma$ and thus only finitely many possible $J$.
\end{proof}

\section{Proof of Theorem \ref{thm:modularsiegel}}

We begin by stating a special case of David and Hirata-Kohno's deep lower bound
for linear forms in $n$ elliptic logarithms if 
 $n=2$ and when the 
elliptic logarithms are periods.

Let $E$ be an elliptic curve defined over
a number field in $\IC$. 
We fix a Weierstrass equation for $E$ with coefficients in the said
number field
and 
 a Weierstrass-$\wp$ function that induces a uniformization
$\IC\rightarrow E(\IC)$. This is a group homomorphism whose kernel
$\omega_1\IZ+\omega_2\IZ$ is a discrete subgroup of $\IC$. 
We start numbering constants anew.

\begin{lemma}
\label{lem:DHK}
Let $d\ge 1$.
  There exists a constant $c_1>0$ depending on $E,d,$ the choice
  of Weierstrass equation, and the choice $\omega_{1,2}$ with the following property. 
Suppose $\alpha,\beta\in\IC$ are algebraic over $\IQ$ of degree at most $d$
and
$\max\{1,\height{\alpha},\height{\beta}\}\le \log B$ for some real
number $B>0$.
If $\alpha\omega_1+\beta\omega_2\not=0$, then 
\begin{equation}
\label{eq:DHKlb}
  \log |\alpha\omega_1+\beta\omega_2|\ge -c_1 \log B.
\end{equation}
\end{lemma}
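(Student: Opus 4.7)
The plan is to obtain this bound as a direct specialization of David and Hirata-Kohno's deep estimate \cite{DHK} for linear forms in elliptic logarithms; the paper already signals this by introducing the lemma as such a specialization. What remains is to check that our linear form fits into their framework and that the various quantities appearing in their general bound reduce to constants in the present setting.

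First I would note that $\omega_1$ and $\omega_2$ lie in the kernel of the uniformization $\IC \rightarrow E(\IC)$, so both are elliptic logarithms of the identity $O \in E(\IC)$, a torsion point whose N\'eron--Tate height vanishes. Hence $\alpha\omega_1 + \beta\omega_2$ is a linear form with algebraic coefficients of degree at most $d$ and height at most $\log B$ in two elliptic logarithms, which is precisely the shape treated in \cite{DHK} with $n = 2$. Their general lower bound is controlled by the Faltings height of $E$, the coefficients of the fixed Weierstrass model, the N\'eron--Tate heights of the points whose logarithms are taken (here zero), the moduli of $\omega_1,\omega_2$, the degree bound $d$, and the heights of $\alpha,\beta$. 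In our situation every such quantity apart from the heights of $\alpha,\beta$ is either zero or fixed; absorbing these fixed quantities into a constant $c_1$ depending on $E$, $d$, the Weierstrass model, and $\omega_1,\omega_2$ produces an inequality of the desired shape.

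The main obstacle is confirming that the dependence on $\log B$ in the DHK estimate really collapses to a clean linear factor. In full generality their bound carries additional factors that grow like $(\log\log B)^\kappa$, and such factors cannot be absorbed into a mere constant multiple of $\log B$. One must therefore use the sharper form of \cite{DHK} that applies when the elliptic logarithms are periods and the corresponding points are torsion, or alternatively reformulate the problem via the ratio $\tau = \omega_2/\omega_1 \in \IH$ by writing $\alpha\omega_1 + \beta\omega_2 = \omega_1(\alpha + \beta\tau)$ and invoking a Gelfond--Schneider--type transcendence measure for $\tau$ (reducing to Liouville's inequality in the CM case, where $\tau$ is algebraic). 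Either route supplies the sharp linear shape $-c_1 \log B$ and completes the proof.
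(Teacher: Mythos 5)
Your proposal is correct and follows essentially the same route as the paper, whose entire proof is a citation of Theorem 1.6 of \cite{DHK} (the sharp period case, linear in $\log B$); your identification of the periods as logarithms of the torsion point $O$ and your warning about the $(\log\log B)^\kappa$ factors in the general DHK bound match the paper's own remark that a dependence like $(\log B)(\log\log B)$ would not suffice. The only caveat is your proposed alternative via a transcendence measure for $\tau=\omega_2/\omega_1$: the Liouville reduction works only when $\tau$ is algebraic (the CM case), whereas the lemma and its later application require arbitrary $\tau$ with $j(\tau)$ algebraic, so the DHK citation is the route that actually carries the argument.
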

\begin{proof}
  This follows from Theorem 1.6 \cite{DHK}. 
\end{proof}

In our application, $\log B$  from (\ref{eq:DHKlb}) will be
approximately
$\log|\Delta|$ and will  compete directly with
the logarithmic lower bound in Lemma \ref{lem:colmez}. It  is thus
 essential that David and Hirata-Kohno's inequality is
logarithmic in $B$. 
A worse dependency such as  $-c_1 (\log B)(\log\log B)$
would not suffice. 

We further distill this result into a formulation adapted to our application.

\begin{lemma}
\label{lem:dioapprox}
  Suppose $\eta\in\IH$ such that $j(\eta)$ is an algebraic number. There exists a
  constant $c_2>0$ which may depend on $\eta$  with the following property. If $\tau\in\IH$ is
  imaginary quadratic with $\max\{1,\height{\tau}\}\le \log B$ for
  some real number $B>0$ and
  if $\tau \not=\eta$, then
  \begin{equation*}
    \log |\tau-\eta |\ge -c_2 \log B. 
  \end{equation*}
\end{lemma}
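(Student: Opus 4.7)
The plan is to reduce Lemma \ref{lem:dioapprox} to Lemma \ref{lem:DHK} by realizing $\tau-\eta$ as a linear form in two periods of an elliptic curve attached to $\eta$. Since $j(\eta)$ is algebraic, I may fix an elliptic curve $E$ defined over some number field in $\IC$ whose $j$-invariant equals $j(\eta)$, together with a Weierstrass equation with algebraic coefficients. The complex uniformization $\IC\rightarrow E(\IC)$ has a period lattice $\omega_1\IZ+\omega_2\IZ$, and by Klein's theorem the quotient $\omega_2/\omega_1$ lies in the ${\rm SL}_2(\IZ)$-orbit of $\eta$. Replacing the basis $(\omega_1,\omega_2)$ by a suitable ${\rm SL}_2(\IZ)$-translate, I may therefore assume without loss of generality that
\begin{equation*}
\omega_2 = \eta \, \omega_1.
\end{equation*}

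Now, given $\tau\in\IH$ imaginary quadratic with $\tau\not=\eta$, write
\begin{equation*}
\tau-\eta \;=\; \frac{1}{\omega_1}\bigl(\tau\,\omega_1 + (-1)\,\omega_2\bigr)
\;=\; \frac{1}{\omega_1}\bigl(\alpha\omega_1 + \beta\omega_2\bigr)
\end{equation*}
with $\alpha=\tau$ and $\beta=-1$. Then $\alpha$ is algebraic of degree at most $2$, $\beta$ is rational, and $\height{\beta}=0$ while $\height{\alpha}=\height{\tau}\le\log B$ by hypothesis. Moreover $\alpha\omega_1+\beta\omega_2=\omega_1(\tau-\eta)\not=0$ since $\tau\not=\eta$ and $\omega_1\not=0$. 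Applying Lemma \ref{lem:DHK} with $d=2$ to the pair $(\alpha,\beta)$ yields a constant $c_1>0$ (depending only on $E$, on the chosen Weierstrass equation, and on $\omega_1,\omega_2$, all of which are determined by $\eta$) such that
\begin{equation*}
\log\bigl|\alpha\omega_1+\beta\omega_2\bigr| \;\ge\; -c_1 \log B.
\end{equation*}
Rearranging,
\begin{equation*}
\log|\tau-\eta| \;\ge\; -c_1\log B - \log|\omega_1|,
\end{equation*}
and since $\log B\ge 1$ I may absorb the term $\log|\omega_1|$ into the constant to obtain the desired bound with $c_2 = c_1 + |\log|\omega_1||$.

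The steps are essentially bookkeeping once the correct reduction is identified; the only real content is recognizing that the difference $\tau-\eta$, when multiplied by $\omega_1$, is precisely a linear combination of the two periods with coefficients $\tau$ and $-1$, and that imaginary quadratic $\tau$ has degree at most $2$ so the uniform degree bound $d=2$ in Lemma \ref{lem:DHK} suffices. The main conceptual obstacle to anticipate, rather than a technical one, is ensuring that the basis of the period lattice can be chosen so that $\omega_2/\omega_1 = \eta$ on the nose rather than some ${\rm SL}_2(\IZ)$-equivalent point; this is handled by the change of basis described above and is legitimate because Lemma \ref{lem:DHK} permits us to tailor $\omega_{1,2}$ to the problem at hand (at the cost of letting $c_1$ depend on them, which is exactly the dependence on $\eta$ allowed in the statement).
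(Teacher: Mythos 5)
Your proposal is correct and follows essentially the same route as the paper: choose the period basis so that $\omega_2/\omega_1=\eta$, apply Lemma \ref{lem:DHK} with $\alpha=\tau$, $\beta=-1$, $d=2$, and absorb $\log|\omega_1|$ into the constant using $\log B\ge 1$. No gaps.
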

\begin{proof}
The algebraic number  $j(\eta)$ is the $j$-invariant on an elliptic
curve as introduced before Lemma \ref{lem:DHK}. We may assume that the periods
$\omega_{1,2}$  satisfy $\eta=\omega_2/\omega_1$.
As $\tau\not=\eta$ the lemma above
 with $\alpha=\tau$ and $\beta=-1$ implies $\log |\tau\omega_1 -\omega_2  |\ge
-c_1\log B$. We subtract $\log|\omega_1|$ and obtain
$\log|\tau-\eta|\ge -c_1\log B - \log|\omega_1|$. 
This lemma follows with an appropriate $c_2$ as $B\ge 2$.
\end{proof}

Let us suppose that $\Gamma,F,$ and $C$ are as in  Theorem
\ref{thm:modularsiegel}. 
We recall that $\phi$
is the natural morphism $Y_\Gamma\rightarrow Y(1)$ and may regard it
as an element in the function field of $X$. 
We abbreviate $X = X_\Gamma$. In the following we enlargen $F$ to a
number field for which $X(F)$ contains $C$ and all poles of $\phi$. 

By   hypothesis  there is  $P_0\in C$ that is not a cusp of $Y_\Gamma$.
 We write $J_0\in F$ for the value of $\phi$ at
$P_0$. 

The Riemann-Roch Theorem provides a
non-constant, rational function $\psi\in F[X\ssm\{P_0\}]$
that vanishes at the
 poles of $\phi$. 
As $\psi$ is regular outside of $P_0$, it must have a pole at $P_0$.

The functions  $\phi$
and $\psi^{-1}$ are algebraically dependent, i.e.
 there is an irreducible polynomial $R\in F[U,V]$ with
$R(\phi,\psi^{-1})=0$. We observe $\deg_U R >0$. 

\begin{lemma}
\label{lem:metric}
There exists a constant $c_5\in (0,1]$ which depends only on $R$ with the
following property. 
Let $K\supset F$ be a number field and $|\cdot|$ an absolute
 value on $K$
that extends the Archimedean absolute  value on $\IQ$.
If $u\in K$ and $v\in K\ssm\{0\}$ with $R(u,v)=0$,
$    |v|  < c_5$, and $u\not=J_0$, then 
 $ \log | u - J_0|  <  (\log|v|)/(2\deg_U R)$.
\end{lemma}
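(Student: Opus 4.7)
The plan is to exploit the assumption that $P_0$ is not a cusp, together with the design of $\psi$ as vanishing at the poles of $\phi$, to force a very rigid algebraic structure on $R(U,0)$. Specifically, my first step is to show that
\[
R(U,0) = c\,(U-J_0)^d
\]
for some nonzero $c \in \overline F$, where $d = \deg_U R$. To establish this, I would consider the morphism $(\phi,\psi^{-1}) : X \to \IP^1\times\IP^1$. Since $R$ is irreducible, the image of this morphism equals the projective closure $Z$ of the affine curve $\{R=0\}$ in $\IP^1\times\IP^1$. Now $\psi^{-1}$ vanishes on $X$ only at $P_0$ (as $\psi$ has its unique pole there), and $\phi(P_0)=J_0$ is finite because $P_0$ is not a cusp; hence the only point of $Z$ with second coordinate $0$ is $(J_0,0)$, and $(\infty,0) \notin Z$. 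The first conclusion says that $J_0$ is the only root of $R(U,0)$, while the second, via bihomogenization of $R$, tells us the coefficient of $U^d$ in $R(U,V)$ does not vanish at $V=0$, so $R(U,0)$ has degree exactly $d$. Together these force the displayed factorization.

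With that structural fact in hand, I would expand $R(J_0+x,V) = \sum_{i\leq d,\;j\geq 0} a_{ij}\,x^i V^j$ in $F[x,V]$. Step one gives $a_{i0}=0$ for $i<d$ and $a_{d0}=c\neq 0$. If $R(J_0+x,v)=0$, rearrangement yields
\[
a_{d0}\,x^d = -v\sum_{\substack{i\leq d\\ j\geq 1}} a_{ij}\,x^i v^{j-1}.
\]
Writing $S = \sum_{i\leq d,\,j\geq 1} |a_{ij}|$, the case $|x|\geq 1$ with $|v|\leq 1$ bounds the right-hand side by $S|v||x|^d$, forcing $|v|\geq |a_{d0}|/S$; so for $|v|<|a_{d0}|/S$ we must have $|x|<1$. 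In that range the right-hand side is bounded by $S|v|$, and therefore $|x|^d\leq B|v|$ with $B = S/|a_{d0}|$.

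Finally I would take $c_5 = \min\{1,\,|a_{d0}|/S,\,B^{-2}\}$. For $|v|<c_5$ the bound $|x|^d\leq B|v|$ combined with $B\,|v|^{1/2}<1$ gives $|x|<|v|^{1/(2d)}$, and taking logarithms produces the desired inequality. Uniformity of $c_5$ across Archimedean absolute values on arbitrary $K\supset F$ is not an obstruction: each such absolute value restricts to one of finitely many Archimedean places of $F$, so $|a_{d0}|$ and $S$ range over finitely many positive values, and I can replace them by a uniform positive lower bound and a uniform upper bound depending only on $R$.

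The main obstacle is the first step. A priori, some roots of $R(U,v)=0$ could escape to infinity as $|v|\to 0$ (divergent branches at $(\infty,0)$), and for such roots the claimed inequality would fail dramatically because $|u-J_0|$ would blow up while $|v|^{1/(2d)}$ shrinks. Ruling out these divergent branches is precisely where the non-cusp hypothesis on $P_0$ and the construction of $\psi$ enter; once the factorization $R(U,0)=c(U-J_0)^d$ is secured, the remainder is a routine dominant-term comparison of Newton-polygon type.
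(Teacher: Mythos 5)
Your proposal is correct and follows essentially the same route as the paper: both reduce the lemma to the structural fact that $R(U,0)=c\,(U-J_0)^{\deg_U R}$ with non-vanishing leading coefficient (deduced from the non-cusp hypothesis on $P_0$ and the construction of $\psi$), and then conclude by a dominant-term estimate with the threshold chosen so that the bound $|u-J_0|^{\deg_U R}\le B|v|$ beats $|v|^{1/2}$. The only cosmetic differences are that you phrase the structural step geometrically via the image of $(\phi,\psi^{-1})$ in $\IP^1\times\IP^1$, whereas the paper argues directly with the coefficients $r_i\in F[V]$ and proves the slightly stronger statement that the top coefficient $r_{\deg_U R}$ is a constant (which its particular final estimate, using $\deg_U Q\le \deg_U R-1$, actually requires, while your two-case $|x|\ge 1$ versus $|x|<1$ analysis gets by with $r_{\deg_U R}(0)\neq 0$ alone).
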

\begin{proof}
In this proof $c_{3,4}>0$ depend only on $R$.
Let us write
$R = r_0 + (U-J_0)r_1+\cdots + (U-J_0)^e r_e$
where $e=\deg_U R$ with $r_i \in F[V]$ and $r_e\not=0$. 

We first claim that $r_e$ is constant. Indeed, otherwise it would
vanish at some $v$ which we may assume to be an element of $F$ after
possibly enlargening this number field. 
 As $\psi$ is non-constant, the irreducible polynomial $R$ is not divisible by a linear
 polynomial in $F[V]$.
So $e\ge 1$ and $r_i(v)\not=0$ for some $i$. Thus
 $X$ contains a point where $\psi^{-1}$ takes the value $v$ and
$\phi$ has a pole. This contradicts our choice of $\psi$. 

Without loss of generality we may assume $r_e=1$. Next we claim
$r_i(0)=0$ if $0\le i\le e-1$. If this were not the case, we could
find $J_0'\not=J_0$ with $R(J_0',0)=0$. This too is impossible by our choice of $\psi$.

Therefore, 
\begin{equation*}
  R = VQ + (U-J_0)^{e}
\end{equation*}
for some $Q\in F[U,V]$ with $\deg_U Q\le e-1$. 

Now let $u$ and $v$ be as in the hypothesis; we will see how to fix
$c_5\in (0,1]$
below. 
We have $ |u-J_0|^{e} = |v Q(u,v)|$ and
$|vQ(u,v)| \le  c_3  \max\{1,|u|\}^{e-1}$
as $|v|\le 1$.
If $|u|\ge\max\{1,2|J_0|\}$, then $|u-J_0| \ge |u|-|J_0|\ge
|u|/2$ and so 
$|u|^{e} \le 2^e c_3 |u|^{e-1}$. 
We find $|u|\le 2^e c_3$. In this case   $|u-J_0|^e =|vQ(u,v)|\le c_4
|v|$ for some $c_4\ge 1$. After adjusting $c_4$ the same bound holds
if $|u|< \max\{1,2|J_0|\}$. 
We set $c_5=c_4^{-2}$ and observe  
 $c_4|v|< |v|^{1/2}$ if $|v|<c_5$.
Thus $ |u-J_0|^e \le |v|^{1/2} <1$ and the lemma follows on
taking the logarithm.
\end{proof}

Let us now  prove Theorem \ref{thm:modularsiegel}. For this we  must verify
that a set $M\subset X(\overline F)$ of special points that is quasi-algebraic-integral
with respect to $C$ is finite.
By definition, $M$ cannot contain the pole of $\psi$ and without loss of
generality
we may assume that $M$ does not contain its zeros either.  
Finally, we may assume that $J_0\not\in \phi(M)$.
Say $\lambda\in F\ssm\{0\}$ with $\lambda
\psi(M)\subset\cO_{\overline{F}}$. 

We will use $c_6,c_7,\ldots$ to denote
positive constants that may depend on $\Gamma, F,C,\lambda,$ and $M$.

Suppose $P\in M$ and let $K\subset\overline F$ be a number field containing $F$ and the
values $\psi(P),\phi(P)$. 
After possibly shrinking $c_5$ we may assume $c_5<|\sigma(\lambda)|$
for all embeddings $\sigma:K\rightarrow\IC$. 
Then 
\begin{alignat*}1
  \height{\lambda \psi(P)}  &= 
\frac{1}{[K:\IQ]}\sum_{|\sigma(\lambda \psi(P))|> 1} \log |\sigma(\lambda
\psi(P))| \\
&\le \height{\lambda} + 
\frac{1}{[K:\IQ]}\left(\sum_{|\sigma(\lambda)|^{-1}< |\sigma(
  \psi(P))|\le c_5^{-1}} \log |\sigma(\psi(P))|
+
\sum_{|\sigma( \psi(P))|> c_5^{-1}} \log
|\sigma(\psi(P))|\right) \\
&\le  c_6 + 
\frac{1}{[K:\IQ]}\sum_{|\sigma(\psi(P))| >  c_5^{-1}} \log |\sigma(\psi(P))|;
\end{alignat*}
as usual, the sums run over  field embeddings
$\sigma:K\rightarrow\IC$. 
Say $J=\phi(P) \in K$, then $R(J,\psi(P)^{-1})=0$.
We apply Lemma \ref{lem:metric} to $u=J$ and  $v=\psi(P)$ 
to obtain
\begin{equation*}
  \height{\lambda\psi(P)} \le c_7\left(1+\frac{1}{[K:\IQ]}\sum_{|\sigma(J-J_0)| <  1} -\log |\sigma(J-J_0)|\right).
\end{equation*}

We already saw that $R$ is not divisible by a linear polynomial in the
variable $V$.   So
 Proposition 5 \cite{BMSprindzhuk} and
$R(J,\psi(P)^{-1})=0$ 
allow
us to bound $\height{J}$ from above linearly in terms of
$\height{(\lambda\psi(P))^{-1}}=\height{\lambda\psi(P)}$. More precisely 
\begin{alignat}1
\nonumber
  \height{J} &\le c_8\left(1+\frac{1}{[K:\IQ]}
\sum_{|\sigma(J-J_0)|<1} -\log |\sigma(J-J_0)|\right)
\end{alignat}
and so
\begin{equation}
\label{eq:hJbound1}
\height{J}\le c_8\left(|\log \epsilon|+\frac{1}{[K:\IQ]}
\sum_{|\sigma(J-J_0)|<\epsilon} -\log |\sigma(J-J_0)|\right)  
\end{equation}
for any $\epsilon \in (0,1/2]$.

The points in $M$ are special, so $J$ is a singular
modulus. 
An elliptic curve attached to $J$ has complex multiplication by an
order with discriminant $\Delta < 0$. 
As in the previous section,  we  will
find an upper bound for $|\Delta|$.

For any embedding $\sigma:K\rightarrow\IC$ we fix $\tau_\sigma\in\cF$ with 
$j(\tau_\sigma)=\sigma(J)$. 
We now proceed as near (\ref{eq:jzlb}) and apply Theorem 2, Chapter 3 \cite{Lang:elliptic}.
If $\epsilon$ is sufficiently small and if $|\sigma(J-J_0)|<\epsilon$, then 
\begin{equation}
\label{eq:ellipticpointcase}
  |\sigma(J-J_0)|\ge  \left\{
  \begin{array}{ll}
   c_9 |\tau_\sigma-\eta_\sigma|^3 &:\text{ if $J_0=0$,}\\
   c_9 |\tau_\sigma-\eta_\sigma|^2 &:\text{ if $J_0=1728$,}\\
   c_9 |\tau_\sigma-\eta_\sigma| &:\text{ else wise.}\\
  \end{array}\right.
\end{equation}
 for some $\eta_\sigma\in\overline\cF$ with 
 $j(\eta_\sigma) = \sigma(J_0)$. 
It is harmless that there are  $2$ choices for $\eta_\sigma$ on the
boundary of $\overline\cF$. 
We note that $\eta_\sigma$ depends only on the base point $J_0$ and
that $\tau_\sigma$ is imaginary quadratic. Thus Lemma
\ref{lem:dioapprox} and the height bound for $\tau_\sigma$
in Lemma \ref{lem:htaubound} yield
$\log |\sigma(J-J_0)| \ge -c_{10}\log|\Delta|$. 
We use this inequality and (\ref{eq:hJbound1}) to bound
\begin{equation*}
  \height{J}\le c_{11}\left(\log|\epsilon| + {\log|\Delta|}
  \frac{\#\{\sigma:K\rightarrow\IC;\,\,
|\sigma(J-J_0)|< \epsilon \}}{[K:\IQ]}\right)
\end{equation*}
for all $\epsilon\in (0,1/2]$. 

The rest of the proof resembles the proof of Theorem \ref{thm:junit}.
Indeed, we may assume that $[\IQ(J):\IQ]$ is sufficiently large and 
as in Lemma \ref{lem:equi} we use equidistribution to prove
that $[K:\IQ]^{-1}\#\{\sigma:K\rightarrow\IC;\,\,
  |\sigma(J-J_0)|<\epsilon \}$ 
is bounded from above linearly by a fixed power, derived from
(\ref{eq:ellipticpointcase}),  of $\epsilon$.
Finally,  we again use the height lower in Lemma \ref{lem:colmez}
to fix an appropriate $\epsilon$ 
which leads to a bound on $|\Delta|$.
As before, this leaves us with only finitely many possibilities for $J=\phi(P)$. 
\qed

\bibliographystyle{amsplain}
\def\cprime{$'$}
\providecommand{\bysame}{\leavevmode\hbox to3em{\hrulefill}\thinspace}
\providecommand{\MR}{\relax\ifhmode\unskip\space\fi MR }
% \MRhref is called by the amsart/book/proc definition of \MR.
\providecommand{\MRhref}[2]{%
  \href{http://www.ams.org/mathscinet-getitem?mr=#1}{#2}
}
\providecommand{\href}[2]{#2}

%\bibliography{../literature,../habeggerlit}

%\vfill
% \noindent {\tt \jobname.tex} created: September 23, 2013. Latest update: \today.

\end{document}